  \numberwithin{equation}{section}
  \newtheorem{theorem}{Theorem}[section]
  \newtheorem{lemma}[theorem]{Lemma}
  \newcommand{\ind}{\mathbf{1}}
  \newcommand{\E}{\mathbb{E}}
  \newcommand{\R}{\mathbb{R}}
  \newcommand{\N}{\mathbb{N}}
  \renewcommand{\hat}{\widehat}
  \newcommand{\cL}{{\ensuremath{\mathcal L}} }
  \newcommand{\bP}{{\ensuremath{\mathbb P}} }
  \newcommand{\bE}{{\ensuremath{\mathbb E}} }
  \DeclareMathSymbol{\leqslant}{\mathalpha}{AMSa}{"36} 
  \DeclareMathSymbol{\geqslant}{\mathalpha}{AMSa}{"3E} 
  \DeclareMathSymbol{\eset}{\mathalpha}{AMSb}{"3F}     
  \renewcommand{\leq}{\;\leqslant\;}                   
  \renewcommand{\geq}{\;\geqslant\;}                   
  \newcommand{\gb}{\beta}
  \newcommand{\gep}{\varepsilon}       
  \newcommand{\go}{\omega}
  \newcommand{\gT}{\Theta}
  \def \ind {{\mathbf 1}}
  \def\captionfont@{\footnotesize}
  \def\captionheadfont@{\scshape}
  \long\def\@makecaption#1#2{%
    \vspace{2mm}
    \setbox\@tempboxa\vbox{\color@setgroup
      \advance\hsize-6pc\noindent
      \captionfont@\captionheadfont@#1\@xp\@ifnotempty\@xp
	  {\@cdr#2\@nil}{.\captionfont@\upshape\enspace#2}%
      \unskip\kern-6pc\par
      \global\setbox\@ne\lastbox\color@endgroup}%
    \ifhbox\@ne 
      \setbox\@ne\hbox{\unhbox\@ne\unskip\unskip\unpenalty\unkern}%
    \fi
    \ifdim\wd\@tempboxa=\z@ 
      \setbox\@ne\hbox to\columnwidth{\hss\kern-6pc\box\@ne\hss}%
    \else 
      \setbox\@ne\vbox{\unvbox\@tempboxa\parskip\z@skip
	  \noindent\unhbox\@ne\advance\hsize-6pc\par}%
  \fi
    \ifnum\@tempcnta<64 
      \addvspace\abovecaptionskip
      \moveright 3pc\box\@ne
    \else 
      \moveright 3pc\box\@ne
      \nobreak
      \vskip\belowcaptionskip
    \fi
  \relax
  }
  \def\writefig#1 #2 #3 {\rlap{\kern #1 truecm
  \raise #2 truecm \hbox{#3}}}
  \newcommand{\1}{{\rm 1}\kern-0.24em{\rm I}}
  \def\beq{\begin{equation}}
  \def\eeq{\end{equation}}
  \title{ Hierarchical pinning model: low disorder relevance in the $b=s$ case.}
  \author{Julien Sohier \footnote{ 
   Technische Universiteit Eindhoven, 
P.O. Box 513, 
5600 MB EINDHOVEN, 
\textit{j.sohier@tue.nl}   } \\}
\begin{document}

  \maketitle
  
   \begin{abstract}
  We consider a hierarchical pinning model introduced by B.Derrida, V.Hakim and J.Vannimenus which undergoes a localization/delocalization 
   phase transition. This model depends on two parameters $b$ and $s$. We show that in the particular case where $b=s$, the disorder is 
    weakly relevant, in the sense that at any given temperature, the quenched and the annealed critical points coincide. This is in contrast 
     with the case where $b \neq s$.
\end{abstract}

 {\bf Keywords}:Hierarchical Models, Quadratic Recurrence Equations, 
 Pinning Models
    
  {\bf Mathematics subject classification (2000)}: 60K35, 82B44, 37H10

  \section{Model}

   The model we consider in this note can be interpreted either as an infinite dimensional system or as a pinning model 
    on a hierarchical lattice. For convenience, we will mainly choose to stick to the latter interpretation. We refer 
     to \cite{DHV} where this model was introduced and to \cite{MGa} for the latter interpretation. 
     
      We consider a family of iid random variables $(\go_n)_{ n \geq 0}$ with
 common law $\bP$ verifying
    \begin{equation}
      M(\gb) := \bE[\exp(\gb \go_1)] < \infty 
    \end{equation}
  for all positive $\gb$. We also consider positive integers $b,s \geq 2$.
    
  For $b \geq s$ and $(\gb,h) \in \R^+ \times \R$, we are interested in a system of recursions
 $(R_n^{(i)})_{ n \geq 0, i\geq 1}$ defined by: 
  \begin{equation} \label{Gen}
    R_{n+1}^{(i)} = \frac{ \prod_{j=1}^s R_n^{(si+j)} + b-1}{b}
  \end{equation}
  for $n, i \in \N$ where the random variables $(R_0^{(i)})_{i \geq 0}$ are given by 
  \begin{equation}
  R_0^{(i)} = \exp(\gb \omega_i +h - \log(M(\gb))).
  \end{equation}
  
  The annealed counterpart of \eqref{Gen} reads
    \begin{equation}\label{ann}
    r_0 =1 \hspace{0.05in} \text{and for} \hspace{0.05in} n \geq 0, r_{n+1} = \frac{r_n^s +b-1}{b}, 
    \end{equation}
   where for every $n \geq 0$, $r_{n} = \E\left[R_{n}^{(1)}\right]$.

  It is clear that for every $n \in \N$, the variables $\left(R_n^{(i)}\right)_{i \geq 0}$ are iid. The random variable 
$R_n^{(1)}$ can be interpreted as the partition function of the hierarchical pinning model with \textit{bond disorder}
 at step $n$. 
 
    \begin{figure}\label{fig1}
         \center{\scalebox{.35}{\input{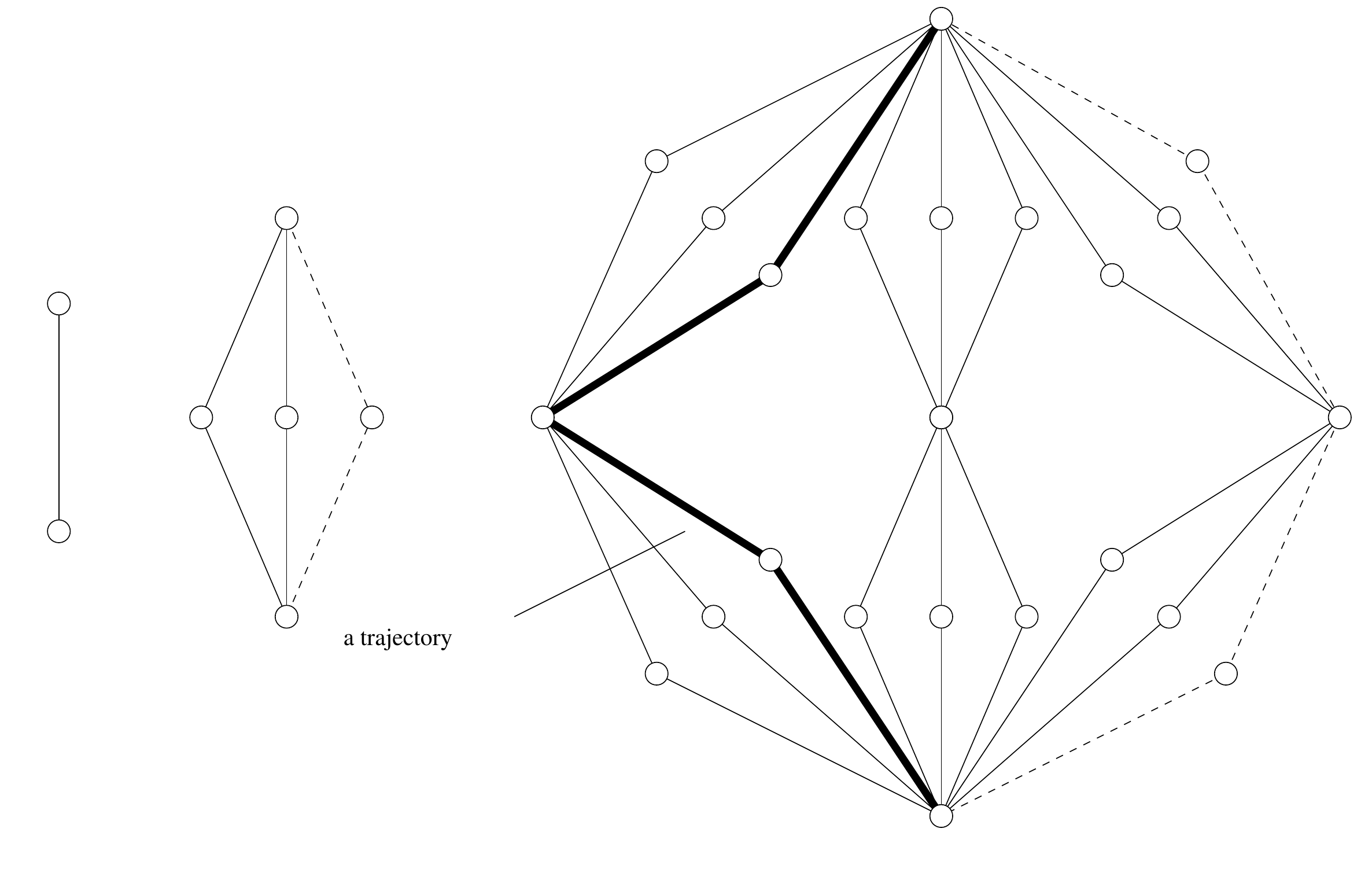_t}}}
         \caption{ Given $b,s$ natural numbers ( here, $b=3,s=2$), we build a diamond iteratively: at each step, one replaces every bound by $b$ branches
          consisting of $s$ bounds. A trajectory of the process in a diamond lattice at level $n$ is a path connecting the two extreme poles; we 
           singled out one trajectory (which we denote by $\mathcal{L}$) using a thick line. At level $n$, each trajectory is made of $s^{n}$ bounds and there are $N_{n}$ trajectories,
            which satisfy $N_{0} = 1$, $N_{n+1} = b N_{n}^{s}$. A simple random walk at level $n$ is the uniform measure over the $N_{n}$ trajectories. 
            A special trajectory $\mathcal{L}$ with vertices labeled $d_{0}, \ldots, d_{s^{n}}$ is chosen and marked by a dashed line, we call it \textit{the wall boundary}.
           The pinning model is then built by rewarding or
            penalizing the trajectories according to their energy in the standard statistical mechanics
               fashion and the partition function of such a model is therefore given by $R_n^{(1)}$ in \eqref{part}. It is easy, and
 detailed in \cite{DHV} how to extract from this recursive construction  the recursion
\eqref{Gen}.
 }
       \end{figure}

  The model we consider may be viewed as a hierarchical version of pinning models, to which a big amount of litterature has 
  been devoted in the past couple of years (see \cite{GB}--\cite{GBlec} for recent reviews). 
  
 It has first been introduced in \cite{DHV}[Section 4.2], where the partition
function $R_{n}$ is defined as: 

 \begin{equation}\label{part}
  R_{n} = \mathbf{E}_{n} \left[ \exp\left( \sum_{i=1}^{s^{n}} (\gb \go_{i} - \log(M(\gb)) + h) \ind_{(S_{i-1} ,S_{i} )=(d_{i-1},d_{i} )} \right) \right],
 \end{equation} 
where $(S_{i})_{i = 0,\dots,s^{n}}$ is a simple random walk on a hierarchical diamond lattice with
growth parameter $b$ and $\mathbf{P}_{n}$ is the expectation with respect to the law of $S$. The labels $d_{0}, \dots, d_{s^{n}}$ are 
the labels of the vertices of a particular path $\cL$
that has been singled out and that we call \textit{the wall} in the sequel.  The construction of diamond lattices
and a graphical description of the model are detailed in Figure \ref{fig1} and its caption.

 This model, and the closely \textit{site disorder hierarchical pinning model}, has been extensively
 studied from a mathematical perspective in \cite{GLT}, in \cite{L} and in \cite{LT}; in particular it is well known that, for $ s > b$, the limit 
$ \lim_{N \to \infty} \frac{1}{s^N} \log R^{(1)}_N := F(\gb,h)$  exists almost surely and in $L^1(\bP)$, and we 
call this limit the free energy of the system. Note that $ F(\gb,\cdot)$ is convex and 
increasing. We define the critical point of the system:
  \begin{equation}
    h_c(\gb) := \inf \{ h \in \R \hspace{0.05in} \text{such that} \hspace{0.05in}  F(\gb,h) > 0 \}. 
  \end{equation}
  
  For an interpretation of $h_{c}(\gb)$ as the transition point between a delocalized and a localized
phase ($h < h_{c}(\gb)$ and  $h > h_{c}(\gb)$, respectively) we refer to \cite{DHV} and \cite{GLT}.

Our aim in this note is to deal with the (missing) case $b = s$; we show in particular that the
transition is of infinite order, in the sense that 
    close to its critical point, the annealed free energy of the system vanishes faster than any power of $h$. Then we 
     prove that in this particular case, at all temperatures, the critical values in the quenched and annealed models coincide, 
      which is in contrast with the case where $s > b$.
      
       \section{ The case $b=s$.}

    When $b=s$, our basic 
recursion \eqref{Gen} reads:
    \begin{equation} \label{DB}
  R_{n+1}^{(i)} = \frac{ \prod_{j=1}^s R_n^{(si+j)} + s -1}{s}. 
  \end{equation}

  The next result is the analogue of Theorem 1.1 of \cite{GLT}, whose proof is immediately adapted to the $b=s$ case.
  \begin{theorem} \label{DefEL}
      The limit $1/s^n \log\left(R_n^{(1)}\right)$ exists $\bP(d\go)$ almost surely and in $L^1(\bP)$, it is
 almost surely constant and non-negative. The function $ (\gb,h) \mapsto F(\gb,h+\log(M(\gb))$ is convex and
 $F(\gb,\cdot)$ is non-decreasing and convex. These properties are inherited from $F_N(\cdot,\cdot)$ defined by 
  \begin{equation}
    F_N(\gb,h) := \frac{1}{s^N} \bE\left[\log R^{(1)}_N\right].
  \end{equation}

  \end{theorem}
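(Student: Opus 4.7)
The plan is to follow the strategy used for Theorem~1.1 of \cite{GLT}; since the diamond structure at level $n$ still produces $s^n$ iid copies of $R_0$ feeding into $R_n^{(1)}$, only cosmetic modifications are needed in the $b=s$ case. First I would establish convergence of the deterministic sequence $F_N$: the pointwise lower bound $R_{n+1}^{(i)} \geq s^{-1}\prod_{j=1}^{s} R_n^{(si+j)}$, taken in logs and expectations, gives $F_{n+1} \geq F_n - \log s/s^{n+1}$, so that $G_n := F_n - \log s/((s-1)s^n)$ is non-decreasing. By Jensen's inequality one has $F_n \leq s^{-n}\log r_n$, and a direct iteration of \eqref{ann} shows that $s^{-n}\log r_n$ is bounded above in $n$. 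Hence $F_n$ converges to some $F(\gb,h) \in \R$.

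To upgrade to almost sure and $L^1$ convergence of $s^{-N}\log R_N^{(1)}$, I would exploit the iid replication structure: $\log R_N^{(1)}$ is a function of the $s^N$ variables $\omega_1,\dots,\omega_{s^N}$ whose sensitivity in each input propagates through \eqref{DB} in a controllable way, so an iterated Efron--Stein or martingale-differences argument should yield $\mathrm{Var}(\log R_N^{(1)}) = O(s^N)$ and hence $\mathrm{Var}(s^{-N}\log R_N^{(1)}) = O(s^{-N})$. Chebyshev plus Borel--Cantelli then give almost sure convergence to the constant $F(\gb,h)$ at each fixed $(\gb, h)$, and $L^1$ convergence follows by uniform integrability, which is ensured because $\bE[|\log R_N^{(1)}|] = O(s^N)$. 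Non-negativity of the limit is immediate from the trivial pointwise bound $R_N^{(1)} \geq (s-1)/s$, which forces $s^{-N}\log R_N^{(1)} \geq -s^{-N}\log(s/(s-1)) \to 0$.

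For convexity and monotonicity, set $\tilde R_0^{(i)} := \exp(\gb\omega_i + h)$, so that $\log \tilde R_0^{(i)}$ is affine in $(\gb,h)$, and let $\tilde R_n^{(i)}$ denote the iterates obtained by feeding $\tilde R_0$ into \eqref{DB}. Using that $x \mapsto \log(s^{-1}(e^x+s-1))$ is convex and non-decreasing, an induction on $n$ shows pathwise in $\omega$ that $(\gb,h)\mapsto\log \tilde R_N^{(1)}$ is convex and non-decreasing; taking expectations preserves both properties, and passing to the limit in $N$ transfers them to $F(\gb, h+\log M(\gb))$. Restricting to $\gb$ fixed yields the convexity and monotonicity of $F(\gb,\cdot)$. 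The main obstacle I anticipate is the variance bound used above: the $\omega_i$ are unbounded, so naive bounded-differences estimates are unavailable, and one must either track carefully how the recursion \eqref{DB} dampens individual perturbations (the $1/s$ prefactor being crucial) or truncate the disorder and control the tails separately using the hypothesis $M(\gb)<\infty$. The remaining ingredients are essentially forced by the algebraic form of the recursion.
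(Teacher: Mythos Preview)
Your proposal is correct and matches the paper's approach exactly: the paper gives no independent proof of this theorem, stating only that it is the analogue of Theorem~1.1 of \cite{GLT} ``whose proof is immediately adapted to the $b=s$ case,'' and you have outlined precisely that adaptation. Your identification of the variance/concentration step as the only place requiring care is accurate, and the rest of your sketch (the super-additivity of $F_N$ up to a summable correction, the Jensen upper bound via $r_n$, the trivial lower bound $R_N^{(1)}\geq (s-1)/s$, and the pathwise convexity argument) is sound.
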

      
  The next result quantifies the fact that at criticality, the annealed free energy vanishes faster than any power of
 $h$. We stress that in the generic case where $s > b$, it is shown in \cite{GLT}[Theorem 1.2] that the critical 
  exponent of the system is equal to $\frac{\log(s) - \log(b)}{\log(s)}$, and hence Theorem \ref{EL} is the natural counterpart of this 
  result. 
    
    \begin{theorem}\label{EL}
      The function $ h \mapsto F(0,h) =: F(h)$ is real analytic on the whole real line. 
Moreover, $h_c(0) = 0$ and there exists $c > 1$ such that for all $ h \in (0,1)$, one has
  \begin{equation} \label{EQ}
    c^{-1} e^{ -\frac{ c  }{h}} \leq F(h) \leq c e^{ -\frac{ c^{-1}}{h}}.
  \end{equation}

  \end{theorem}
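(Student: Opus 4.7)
At $\gb=0$ the recursion \eqref{DB} is deterministic: with $r_0 = e^h$ and $r_{n+1} = f(r_n) := (r_n^s + s-1)/s$, one has $F(h) = \lim_n s^{-n}\log r_n$. The factorization $f(x) - x = (x-1)^2 q(x)/s$, where $q$ is a polynomial strictly positive on $(0,\infty)$ with $q(1) = s(s-1)/2$, shows that $x=1$ is the unique fixed point of $f$ in $(0,\infty)$ and that it is parabolic, with $f'(1)=1$. The monotonicity of $f$ then gives $r_n \nearrow 1$ for $r_0 \in (0,1]$ and $r_n \nearrow \infty$ for $r_0 > 1$. Thus $s^{-n}\log r_n \to 0$ for $h\leq 0$; for $h>0$, the sequence is positive and strictly decreasing (since $\log(r_{n+1}/r_n^s) < 0$), bounded below by $0$, and converges to a strictly positive limit. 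This yields $h_c(0)=0$.

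\textbf{Quantitative bounds.} Writing $u_n := r_n - 1$, Taylor expansion gives $u_{n+1} = u_n + \tfrac{s-1}{2} u_n^2 + O(u_n^3)$. For $0 < h \leq 1$, $u_0 \asymp h$, and a standard parabolic-escape comparison with the ODE $\dot u = \tfrac{s-1}{2} u^2$, whose solution $u(t) = u_0/(1 - \tfrac{s-1}{2} u_0 t)$ blows up at $t = 2/((s-1) u_0)$, shows that the first $N=N(h)$ with $u_N \geq 1$ satisfies $c_-/h \leq N(h) \leq c_+/h$. Past escape, $\log r_{n+1} = s\log r_n - \log s + O(r_n^{-s})$, and geometric telescoping yields $F(h) = s^{-N}\log r_N - \log s/(s^N(s-1)) + O(s^{-N} r_N^{-s})$. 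Since $\log r_N$ lies between two positive constants independent of $h$, both halves of \eqref{EQ} follow from $N(h) = \Theta(1/h)$.

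\textbf{Analyticity on $\R\setminus\{0\}$.} Setting $g_n := \log r_n$, the recursion gives the telescoped expression
\begin{equation*}
F(h) \;=\; h + \sum_{n\geq 0} s^{-(n+1)}\bigl[\log(1 + (s-1)e^{-s g_n(h)}) - \log s\bigr],
\end{equation*}
or equivalently the functional equation $s F(h) = F(h_1(h))$ with $h_1(h) := \log((e^{sh}+s-1)/s)$ entire in $h$. For $\mathrm{Re}\,h$ large, the series shows $F(h) = h - \log s/(s-1) + O(e^{-s\,\mathrm{Re}\,h})$ and converges absolutely on a half-plane $\{\mathrm{Re}\,h > M_0\}$, giving analyticity there. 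For arbitrary $h_0 > 0$, iterating the functional equation yields $F(h) = s^{-n} F(h_n(h))$; choose $n$ with $h_n(h_0) > M_0$, observe that $h_n$ is analytic on a complex neighborhood of $h_0$, and the identity propagates analyticity from large arguments back to $h_0$. On $(-\infty, 0)$, $F \equiv 0$ is trivially analytic.

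\textbf{The obstacle at $h=0$.} This is the one point where the statement as worded cannot hold, and any plan must address it explicitly. The lower bound in \eqref{EQ} forces $F(h) > 0$ for every $h \in (0,1)$, while by the first paragraph $F \equiv 0$ on $(-\infty, 0]$, so any putative real-analytic extension at $0$ would have identically vanishing Taylor series and would then vanish on a whole neighborhood of $0$, contradicting \eqref{EQ}. The upper bound in \eqref{EQ}, together with analogous bounds on derivatives $F^{(k)}$ that the explicit series readily provides, does however show that $F$ is $C^{\infty}$ at $0$ with every derivative equal to zero; thus the strongest statement compatible with \eqref{EQ} is real analyticity on $\R\setminus\{0\}$ together with $C^{\infty}$-flatness at $0$, and this is what the three preceding steps actually deliver.
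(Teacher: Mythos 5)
Your proof of the bounds \eqref{EQ} is correct and rests on the same mechanism as the paper's: the parabolic fixed point of $x\mapsto (x^s+s-1)/s$ at $x=1$, analysed through the reciprocal of the deviation ($1/u_n$ grows linearly with slope $(s-1)/2$). The only difference in organisation is that you follow the forward orbit of $r_0=e^h$ and estimate the escape time $N(h)=\Theta(1/h)$, whereas the paper pulls the level set $\hat F(0,a)=1$ backwards through the inverse map $a_{n+1}=(sa_n+1)^{1/s}-1$ and exploits the exact identity $\hat F(0,a_n)=s^{-n}$ together with monotonicity of $\hat F(0,\cdot)$; these are two phrasings of the same computation. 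One small wrinkle in your first paragraph: ``decreasing and bounded below by $0$'' does not by itself give a \emph{strictly} positive limit; that positivity is only secured by your quantitative lower bound in the second paragraph, so the logical order should make clear that $h_c(0)=0$ is a consequence of \eqref{EQ} rather than of the monotonicity alone.

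Where you genuinely depart from (and go beyond) the paper is the analyticity claim: the paper's proof establishes only \eqref{EQ} and is silent on analyticity, while you supply a standard and workable argument via the convergent series for $F$ on a right half-plane plus propagation through the functional equation $sF(h)=F(h_1(h))$. Your observation about $h=0$ is also well taken: since $F\equiv 0$ on $(-\infty,0]$ and the lower bound in \eqref{EQ} forces $F>0$ on $(0,1)$, real analyticity ``on the whole real line'' cannot hold at $h=0$; the statement should read ``real analytic except at $h=h_c(0)=0$'' (as in the corresponding result of Giacomin--Lacoin--Toninelli), with at most $C^\infty$-flatness at the critical point — which is exactly what ``transition of infinite order'' means. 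Minor quibble: $h_1(h)=\log((e^{sh}+s-1)/s)$ is not entire (it has branch points where $e^{sh}=1-s$), but it is analytic on a neighbourhood of the positive real axis, which is all your propagation step needs.
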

  
   A remarkable feature of these
     models  is the fact that in the particular case where $b=s$, in analogy with the disordered 
     pinning model with loop exponent one which has been treated in \cite{AZ} (and rededuced in \cite{CdH}[Corollary 1.7] via
      large deviation arguments), one can show 
      that the quenched critical point and the annealed one coincide for any given $\gb \geq 0$. We note that the equality of these critical points
       fails  at least at low temperature 
     as soon as $s > b$ and the $\go_{i}$'s are unbounded random variables (see \cite{GLT}[Corollary 4.2]).

  \begin{theorem}\label{HierBS} For 
 every positive $\gb$, one has the equality $h_c(\gb) = h_c(0)(=0)$.
  \end{theorem}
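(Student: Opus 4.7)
The bound $h_c(\gb) \geq h_c(0) = 0$ is immediate from Jensen's inequality: $\E[\log R_n^{(1)}] \leq \log r_n$, hence $F(\gb, h) \leq F(0, h)$. The content of the theorem is therefore the reverse bound $h_c(\gb) \leq 0$, and by the monotonicity of $F(\gb, \cdot)$ given in Theorem \ref{DefEL}, it suffices to prove $F(\gb, h) > 0$ for an arbitrarily small $h > 0$ with $\gb > 0$ fixed. The natural objects to work with are the normalized partition functions $X_n^{(i)} := R_n^{(i)}/r_n$, which are iid with unit mean and, using the identity $s r_{n+1} = r_n^s + s - 1$, satisfy
$$
X_{n+1}^{(i)} = a_n \prod_{j=1}^s X_n^{(si+j)} + (1-a_n), \qquad a_n := \frac{r_n^s}{r_n^s + s - 1} \in [1/s, 1).
$$
Since $s^{-n}\log R_n^{(1)} = s^{-n}\log r_n + s^{-n}\log X_n^{(1)}$ and the first term tends to $F(0, h) > 0$, the theorem reduces (via the a.s.\ and $L^1$ convergence of Theorem \ref{DefEL}) to establishing the asymptotic $s^{-n}\E[\log X_n^{(1)}] \to 0$; Jensen already gives the upper bound $\leq 0$, so only a matching lower bound is in question.

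I would prove this via a second-moment estimate on the variance $u_n := \mathrm{Var}(X_n^{(1)})$, which squaring the recursion and using $\E[X_n^{(i)}] = 1$ shows to satisfy
$$
u_{n+1} = a_n^2 \bigl[(1 + u_n)^s - 1\bigr].
$$
By Theorem \ref{EL}, the annealed free energy $F(0, h)$ vanishes faster than any power of $h$, so the annealed iterate $r_n$ remains very close to its unstable fixed point $1$ throughout a large initial window of length $N(h) \sim \log(1/F(0,h))/\log s$, on which $a_n \approx 1/s$. On this window, the linearisation $s a_n^2 \approx 1/s$ of the RHS at $u_n = 0$ is strictly less than $1$, so $u_n$ is contracted geometrically and reaches values smaller than any power of $h$ by time $N(h)$; this absorbs the initial variance $u_0 = M(2\gb)/M(\gb)^2 - 1$, however large (and thus independently of $\gb$).

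The hard part will be controlling the post-window phase $n > N(h)$, where $r_n$ diverges double-exponentially, $a_n$ tends to $1$ and the variance recursion is no longer contracting, so a naive pointwise upper bound on $u_n$ eventually blows up. To handle it, I would combine the extremely small value of $u_{N(h)}$ with the quantitative super-exponential rate $1 - a_n \sim (s-1)/r_n^s$, and, in parallel, exploit the telescoping identity $\prod_{k=0}^{n-1}(a_k s) = \prod_{k=0}^{n-1} r_k^s/r_{k+1}$ (which is specific to $b = s$) together with the concavity bound $\log(a e^z + 1 - a) \geq a z$; the aim is to obtain either a Paley--Zygmund lower bound $\p(X_n^{(1)} \geq 1/2) \geq c > 0$ uniform in $n$, or a direct estimate $s^{-n}|\E[\log X_n^{(1)}]| \to 0$. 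Combined with the a.s.\ constancy of the free energy limit in Theorem \ref{DefEL}, either output yields $F(\gb, h) \geq F(0, h) > 0$, which together with the Jensen upper bound completes the proof.
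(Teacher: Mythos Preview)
Your approach via second moments is genuinely different from the paper's, but it contains a real gap. The paper follows the Alexander--Zygouras strategy: fix $k$ so that $s^{-k}\log r_k \geq F(0,h)/2$, then choose $n \gg k$ and call an $n$-subdiamond \emph{good} if $\sum_j R_k^{(j)} \geq s^{n-k} r_k/2$; Chebyshev at the \emph{fixed finite level} $k$ gives $\bP[\text{good}] \geq 1/2$ once $n$ is large. One then restricts $R_N$ to trajectories that touch the wall only inside one $k$-subdiamond of each good $n$-diamond, and balances the energy gain against the entropy cost coming from Lemma~\ref{prob}. The key point is that this argument never iterates a moment recursion: it freezes the disorder at level $k$, where $R_k^{(1)}$ has finite variance for every $\gb$, and tames fluctuations by the law of large numbers over many independent $k$-blocks.

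Your claim that on the initial window $u_n$ is ``contracted geometrically \ldots\ however large'' $u_0$ may be is false. The map $u \mapsto s^{-2}\bigl[(1+u)^s - 1\bigr]$ is contracting only on the basin of attraction of $0$; it has a positive repelling fixed point (for $s=2$ one computes $u^*=2$), and for $u_0 > u^*$ the iterates diverge. Since $u_0 = M(2\gb)/M(\gb)^2 - 1$ is unbounded in $\gb$, the variance blows up from the very first step for large $\gb$, and Paley--Zygmund is unavailable. Your alternative via the convexity bound $\log(a e^z + 1-a) \geq a z$ yields only $s^{-n}\E[\log X_n] \geq \E[\log X_0]\prod_{k<n} a_k$; using your own telescoping identity $\prod_{k<n}(s a_k)=\prod_{k<n} r_k^s/r_{k+1}$, this product behaves like $s^{n-N(h)}$, so in the limit one gets at best $F(\gb,h) \geq F(0,h)\bigl(1 - c\,|\E[\log X_0]|\bigr)$, which is again negative for large $\gb$. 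The ``hard part'' you flag is therefore not a detail to be filled in but the entire content of the theorem at large disorder, and neither of your two proposed routes resolves it.
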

  
  \section{Proofs.}
  
  \begin{proof}[Proof of Theorem \ref{EL}]
  The proof is close to the one in \cite{GLT}[Theorem 1.2]. We first define the useful notation
  $ \hat{F}(\gb,\gep) := F(\gb,h)$ where $\gep := e^h -1$.
  
      Then we consider $a > 0$ verifying $\hat{F}(0,a) = 1$ (which is possible because of the convexity and monotonicity 
of $F(\gb,\cdot)$) and we define the sequence $(a_n)_{n \geq 0}$ by $a_0 =a$ and 
  \begin{equation}
    a_{n+1} =  (sa_n +1)^{1/s} -1.
  \end{equation}
   
    One readily realizes that, on the one hand $a_n \to 0$ as $n \to \infty$, and on the other hand 
$s \hat{F}(0,a_{n+1}) = F(a_{n})$; as a consequence, $\hat{F}(0,a_{n}) = 1/s^n$.

    We claim that $ a_n \sim \frac{2}{(s-1)n}$ as $n \to \infty$. For this, noting that the sequence $(a_n)_{n \geq 0}$ satisfies:
  \begin{equation}
    a_n - a_{n+1} = \frac{(s-1)a_{n+1}^2}{2} + o(a_{n+1}^2),
  \end{equation}
  we get that  $a_n \sim a_{n+1}$. Hence the sequence $v_n = 1/a_n$ verifies 
    \begin{equation}
    v_{n+1} - v_n = \frac{(s-1)a_{n+1}}{2a_n} + o(a_n) \to \frac{s-1}{2},
  \end{equation}
  and thus $v_n \sim (s-1)n/2$ as $n \to \infty$.

  From this we deduce that there exists $c > 0$ such that
  \begin{equation}
    e^{-{2 c \log(s) \over  (s-1)a_n}} \leq \hat{F}(0,a_{n}) \leq e^{-{2 c^{-1} \log(s) \over  (s-1)a_n}}. 
  \end{equation}

Finally we get that for all $n$ and all $\gep \in [a_n,a_{n+1}]$, one has:
  \begin{equation}
    \hat{F}(0,\gep) \geq  \hat{F}(0,a_{n+1}) \geq c^{-1} e^{-{2 c \log(s) \over  (s-1)\gep}},
    \end{equation}
  \begin{equation}
    \hat{F}(0,\gep) \leq  \hat{F}(0,a_{n}) \leq c e^{-{2 c^{-1} \log(s) \over  (s-1)\gep}},
  \end{equation}
 which ends the proof. 
  \end{proof} 
  
  Our proof of Theorem  \ref{HierBS}  is closely related to the one of \cite{AZ} in the non-hierarchical pinning case. Namely,
  we bound $F(\gb,h)$ from below by restricting the partition function to paths which visit the defect wall $\mathcal{L}$ 
exclusively in \textit{good $n$-diamonds}; roughly speaking, good diamonds are diamonds such that the partition function 
restricted to this diamond is large enough. It turns out that we can prove that the cardinality of these good $n$-diamonds has
positive density as $N \to \infty$.

  In what follows, we consider $\mathcal{A}_{n}$ an $n$--diamond, that is a diamond obtained from the construction
  described in Figure \ref{fig1} after $n$ steps.
   For $k < n$, we numerate the successive $s^{n-k}$ distinct $k$--diamonds
   $(\mathcal{A}^{1}_{k}, \ldots, \mathcal{A}^{s^{n-k}}_{k})$ of $\mathcal{A}$ intersecting $\mathcal{L}$
   in the natural way, and we refer to them as the \textit{subdiamonds of size $k$ of $\mathcal{A}_{n}$}.  

  Let $q_n$ be the probability that the random walk $S$ goes through a diamond of size $n$ without visiting $\mathcal{L}$, namely
  \begin{equation}
   q_{n} = \mathbf{P}_{n} \left[(S_{j-1},S_{j}) \neq (d_{i-1},d_{i}), i =1,\ldots,n  \right].
  \end{equation}  
   
 Let $p_{k,n}$ the probability that $S$ visits a subdiamond $\mathcal{A}^{j_{0}}_{k}$ (where $j_{0} \in [1,s^{n-k}]$)
  and does not visit $\mathcal{L}$ outside from $\mathcal{A}^{j_{0}}_{k}$; namely 
\begin{equation}
 p_{k,n} = \mathbf{P}_{n} \left[ (S_{(j_{0}-1) s^{k}},S_{(j_{0}-1)s^{k} +1}) \in \mathcal{A}^{j_{0}}_{k},
 (S_{j-1},S_{j}) \neq (d_{i-1},d_{i}), (S_{j-1},S_{j}) \notin \mathcal{A}^{j_{0}}_{k}  \right].
\end{equation} 
 
  It is easy to realise that this probability is independent of $j_{0} \in \left[1,s^{n-k}\right]$.

  To perform the usual energy gain/entropy cost for the proof of our main result, we need the following estimate on the quantity $p_{k,n}$:

  \begin{lemma}\label{prob}  As $n \to \infty$, we have the equivalence:
  \begin{equation}\label{amq}
    1 - q_n \sim { 2 \over (s-1)n}.
  \end{equation}
   
    Furthermore, as $k \to \infty$ (and for any $n > k$), we have:
  \begin{equation}\label{am}
  \log(p_{k,n}) \sim 2 \log(k/n).
  \end{equation} 
  \end{lemma}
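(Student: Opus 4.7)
My plan is to combine a hierarchical decomposition of the diamond lattice with the same Cesàro-type analysis already used in the proof of Theorem \ref{EL}. For \eqref{amq}, I would first derive the recursion
\[
q_{n+1} = \frac{s-1}{s} + \frac{q_n^s}{s}, \qquad q_0 = 0,
\]
by splitting a level-$(n+1)$ diamond into its $s$ parallel top branches, only one of which contains the wall $\mathcal{L}$: the walk either picks a non-wall branch (probability $(s-1)/s$, automatically avoiding $\mathcal{L}$) or picks the wall branch and must then independently avoid $\mathcal{L}$ in each of the $s$ constituent level-$n$ wall sub-diamonds. Setting $u_n := 1 - q_n$, a Taylor expansion of $1-(1-u_n)^s$ gives the quadratic recursion $u_{n+1} = u_n - \frac{s-1}{2} u_n^2 + O(u_n^3)$, formally identical to the one satisfied by the sequence $a_n$ in the proof of Theorem \ref{EL}. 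A fixed-point analysis on $[0,1]$ shows that $u_n \downarrow 0$, and then $v_n := 1/u_n$ satisfies $v_{n+1} - v_n \to (s-1)/2$, giving $v_n \sim (s-1)n/2$, which is exactly \eqref{amq}.

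For \eqref{am}, I would first establish a product formula for $p_{k,n}$. Conditional on the walk making the $n-k$ specific hierarchical branch choices needed to reach $\mathcal{A}_k^{j_0}$, at each level $\ell \in \{k,\dots,n-1\}$ the walk automatically enters $s$ level-$\ell$ wall sub-diamonds along the wall sub-branch of the level-$(\ell+1)$ wall sub-diamond containing $\mathcal{A}_k^{j_0}$; exactly one of them leads to $\mathcal{A}_k^{j_0}$ (where the recursion continues), while the walk must avoid $\mathcal{L}$ in each of the other $s-1$. Since the walk restrictions to disjoint sub-diamonds are independent, this produces a factor $q_\ell^{s-1}$ per level, yielding a product formula whose logarithm is, up to a constant entropic term, $(s-1) \sum_{\ell=k}^{n-1} \log q_\ell$. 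Using $\log q_\ell = -u_\ell + O(u_\ell^2) \sim -2/((s-1)\ell)$ from the first part one concludes
\[
(s-1) \sum_{\ell=k}^{n-1} \log q_\ell \sim -2 \sum_{\ell=k}^{n-1} \frac{1}{\ell} \sim -2 \log(n/k) = 2 \log(k/n),
\]
which is \eqref{am}.

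The main obstacle will be the combinatorial bookkeeping in establishing the product formula for $p_{k,n}$: one must correctly identify which level-$\ell$ wall sub-diamonds the walk is forced to visit given that it enters $\mathcal{A}_k^{j_0}$, and verify the independence of the walk's behaviour across the sibling sub-diamonds at each level of the hierarchy. Once the formula is in place the asymptotic is a routine harmonic-sum computation, in spirit identical to the one already carried out for the sequence $v_n$ in the proof of Theorem \ref{EL}.
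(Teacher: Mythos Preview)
Your plan is essentially the paper's own proof: the same recursion $q_{n+1}=(q_n^s+s-1)/s$ with $q_0=0$, the same expansion $p_{n+1}=p_n-\tfrac{s-1}{2}p_n^2+o(p_n^2)$ for $p_n:=1-q_n$, the same $v_n=1/p_n$ trick already used for Theorem \ref{EL}, and for \eqref{am} the same product formula $p_{k,n}=\prod_{j=k}^{n-1}q_j^{s-1}$ followed by the harmonic-sum estimate $(s-1)\sum_{\ell=k}^{n-1}\log q_\ell\sim -2\sum_{\ell=k}^{n-1}1/\ell\sim 2\log(k/n)$.

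The one point to watch is your ``constant entropic term''. The paper's formula \eqref{rel} carries \emph{no} extra factor at all, and if your hierarchical branch-choice accounting really produces an additional factor of the type $(1/s)^{n-k}$ coming from the $n-k$ forced branch choices, that contribution is linear in $n-k$ (not constant) and would dominate $2\log(k/n)$, contradicting \eqref{am} as written. So make sure your combinatorial bookkeeping lands exactly on \eqref{rel}; otherwise the asymptotic you are aiming for does not hold in the stated form, even though the downstream application in Theorem \ref{HierBS} is insensitive to such a factor.
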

  \begin{proof}
    Plainly, the sequence $(q_n)_{ n \geq 0}$ satisfies the following recursion:
  \begin{equation}
    q_{n+1} = \frac{q_n^s +s-1}{s}
  \end{equation}
  with $q_0 = 0$. It is clear that $q_n \to 1$ as $n \to \infty$. Note that $p_n := 1 - q_n$ satisfies 
  \begin{equation}
    p_{n+1} = p_n - (s-1)p_n^2/2 +o(p_n^2)
  \end{equation}
    with $p_0 =1.$ Using the same trick as for the proof of Theorem \ref{EL}, we directly get 
that $ p_n \sim { 2 \over (s-1)n}$. 

    Then we have the equality:
  \begin{equation}\label{rel}
    p_{k,n} = \prod_{j=k}^{n-1} q_j^{s-1}.
  \end{equation}
  
    Combining \eqref{rel} and \eqref{amq}, we deduce \eqref{am}.
  
  \end{proof}
  
%
%

   \begin{proof}[Proof of Theorem \ref{HierBS}]

  To prove Theorem \ref{HierBS}, it is enough to show that, given $\gb > 0$, for any $ h > h_{c}(0), F(\gb,h) > 0 $.

  We fix $k \geq k(h)$ large enough such that
   \begin{equation}\label{cok}
    \frac{\log(r_k)}{s^k} \geq F(h)/2.
   \end{equation}

  For $h > h_c(0)$, we then fix $n$ large (depending on $\gb,h$) such that:
  \begin{equation} \label{Cond}
   \log_{s} \left( \frac{8 s^{k} V\left(R_{k}^{(1)}\right)}{r_{k}^{2}} \right)  < n,
  \end{equation}
   where for a random variable $X$, we write $V(X)$ for the variance of $X$ and $\log_{s}(x) := \log(x)/\log(s)$.

  We fix $N > n$ and we consider $\mathcal{A}$ an $N$--diamond. 
  
  We say that the $i$--th $n$--subdiamond of $\mathcal{A}$ is \textit{good} (for $i \in \left[1;s^{N-n}\right]$) if 
  \begin{equation}
    \sum_{j= (i-1)s^{n-k}+1}^{is^{n-k}} R_k^{(j)} \geq s^{n-k} r_k/2
  \end{equation}
  and it is said \textit{bad} otherwise (where we recall that the partitions functions $\left(R_k^{(j)}\right)_{ 1 \leq j \leq s^{N-k}}$ of 
  the $k$--subdiamonds of $\mathcal{A}$  are defined in the 
   recursion \eqref{Gen}). We denote by $I_N := \{ i_1 < i_2 < \ldots < i_{|I_N|} \} $ the set of good
 $n$-diamonds. Making use of Chebychev's inequality, we get:
    \begin{equation}
    \bP [1 \notin I_N] \leq \frac{ 4 s^{k-n}  V\left(R_k^{(1)}\right)}{  \bE\left[ R_k^{(1)}\right]^2}.
    \end{equation}

  Thanks to  \eqref{Cond}, the last quantity is smaller than $1/2$, so that $ \bP[ 1 \in I_N] =: p_{good} \geq 1/2$. 
The sequence $(i_{j}-i_{j-1})_{j \geq 2}$ is iid with common law the law of a geometric random variable with parameter $p_{good}$,
 so that
  by the law of large numbers, as $N \to \infty$, the following convergence holds almost surely:
    \begin{equation}\label{usLG}
    \lim_{N \to \infty} \frac{|I_N|}{s^{N-n}} \to p_{good}.
    \end{equation}

      \begin{figure}\label{fig2}
         \center{\scalebox{.4}{\input{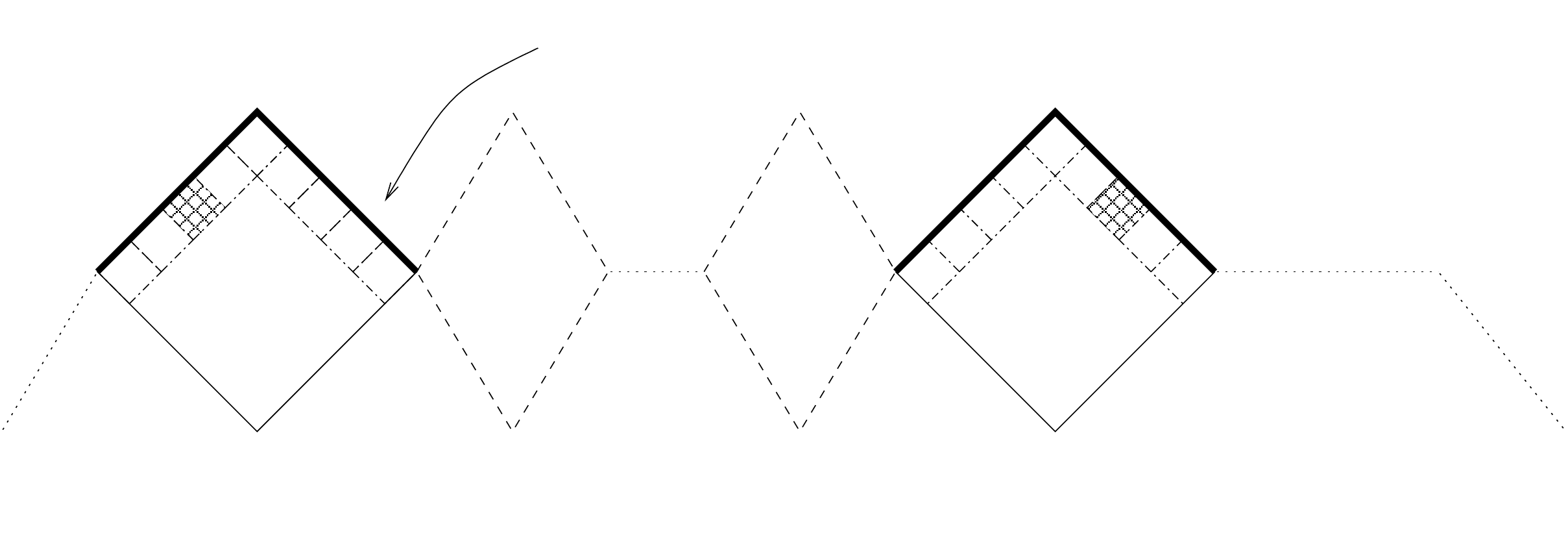_t}}}
         \caption{ A graphical description of the set of trajectories $\gT_{N}^{I_N}$: it is the set of trajectories 
          hitting the wall $\cL$, which is depicted by a thick line, only within one $k$-subdiamond (here the filled $k$--diamonds) 
           of every good $n$--diamond. 
 }
       \end{figure}
      We denote by $\gT_{N}^{I_N}$ the set of trajectories of $S$ in $\mathcal{A}$ that do not intersect $\cL$ outside
      of good $n$-diamonds, and if they visit 
a good $n$-diamond, all the intersections to $\cL$ in this $n$-diamond are performed inside of the same sub $k$-diamond. In the following,
 we write $k_j$ for the location of the sub $k$-diamond in the $j$-th good $n$-diamond visited by $S$ ( we recall that
the $s^{n-k}$ sub $k$-diamonds are numbered in the natural way), and we denote by $l_{k_{j-1},k_j}$ the probability for the
 walk of leaving the $k_{j-1}$-th $k$-diamond of the $j-1$-th good $n$-diamond and visiting the $k_j$-th $k$-diamond of the
 $j$-th good $n$-diamond without visiting $\cL$ between these two $k$-diamonds. Finally, we write $J_j$ for the location of
 the  $j$-th good $n$-diamond. We refer to Figure \ref{fig2} for a graphical description of the set $\gT_{N}^{I_N}$.

  Denoting by $R_{N}( \Lambda)$ the partition function appearing in \eqref{part} for the 
  trajectories of $S$ restricted to a set of trajectories $\Lambda$, we have the inequality:
    \begin{equation}
  R_N( \gT_{N}^{I_N} ) \geq  \sum_{ k_1 \in [ i_1s^{n-k}; (i_1+1)s^{n-k} -1]} \ldots
  \sum_{ k_{|I_N|} \in [  i_{|I_N|}s^{n-k}; (i_{|I_N|}+1)s^{n-k} -1]} \prod_{j=1}^{|I_N|} R_{k}^{(k_j)} l_{k_{j-1},k_j}.
  \end{equation}

    The following equality holds:
  \begin{equation}
  \prod_{j=1}^{|I_N|} l_{k_{j-1},k_j} = \prod_{j=1}^{|I_N|} p_{k,n} q_n^{i_{j+1}-i_{j}+1}.
  \end{equation}
   
    This implies:
  \begin{equation}
  R_N( \gT_{N}^{I_N} ) \geq \prod_{j=1}^{|I_N|} \left[ \sum_{ k_j \in J_j} R_{k}^{(k_j)} \right] p_{k,n} q_n^{i_{j+1}-i_{j}+1}
  \end{equation}
    \begin{equation*}
  \geq \prod_{j=1}^{|I_N|} s^{n-k} {r_k \over 2} p_{k,n} q_n^{i_{j+1}-i_{j}+1}
  \end{equation*}
    because of the definition of a good $n$-diamond. From this last inequality, we deduce:
  \begin{equation}
  \frac{\log(R_N)}{s^N} \geq \frac{|I_N|}{s^{N}} \left( \log(s^{n-k}) -\log(2) +  \log(r_k)
  +  \log(p_{k,n}) \right) + \frac{ \sum_{j=1}^{|I_N|} (i_{j+1} - i_j +1) \log(q_n)}{s^N}. 
  \end{equation}
   
  Considering the limit when $N$ goes to infinity, making use of \eqref{usLG} and of the law of large numbers, we get the following inequality:
  \begin{equation}
    F(\gb,h) \geq \frac{p_{good}}{s^n} \left[ (n-k) \log(s)-\log(2) +  \log(r_k) + \log(p_{k,n}) + \log(q_n) \bE[i_1] \right].
  \end{equation}
  
    Considering $k$ large enough, and then $n$ large enough (in particular satisfying the condition \eqref{Cond}), we make use
    of Lemma \ref{prob} to deduce that there exists $c \in (0,1)$ which can be chosen arbitrarily close to $1$ such that:
  \begin{equation}
    F(\gb,h)  \geq \frac{p_{good}}{s^n} \left[ (n-k)\log(s) -\log(2) + \log(r_k) 
    +  2 c[\log(k) - \log(n)] -{2 \over c(s-1)} \frac{\bE[i_1]}{n} \right].
  \end{equation}
   
  Now we use the fact that $p_{good} > 1/2$ and \eqref{cok} to get that: 
  \begin{equation}
    F(\gb,h) \geq \frac{p_{good}}{s^n} \left[  (n-k)\log(s) -\log(2) + 2 c[\log(k) - \log(n)] + s^k F(h)/2 - {4 \over c(s-1)n}  \right],
  \end{equation}
    and this last quantity is (strictly) positive for $n$ large enough. 
  
     \end{proof}
     
      \textbf{Acknowledgments:} the  author thanks H. Lacoin for fruitful comments. Part of this work was supported by the
      NWO STAR grant: ``Metastable and cut-off behavior of stochastic processes''. 

   \bibliographystyle{plain}
 
 \bibliography{JSbs}

  \end{document}